\documentclass[11pt]{article}

\usepackage{amsmath,amsthm,amssymb}
\usepackage{mathrsfs,bm}
\usepackage{framed}
\usepackage{mathtools, enumitem}
\usepackage{cases}

\usepackage{float}

\usepackage{tikz-cd}
\usepackage{booktabs}

\usepackage{hyperref}
\hypersetup{%
colorlinks=true,
linkcolor=blue
}

\usepackage{tikz}
\usepackage[a4paper,top=2.5cm,bottom=2.5cm,left=3cm,right=3cm]{geometry}

\makeatletter
\newcommand{\leqnomode}{\tagsleft@true\let\veqno\@@leqno}
\makeatother

\usepackage{expl3}
\ExplSyntaxOn
\cs_new_eq:NN \Repeat \prg_replicate:nn
\ExplSyntaxOff

\newcommand{\jbk}[1]{\left\langle {#1} \right\rangle}

\newcommand{\rmop}[1]{\mathop{\mathrm{#1}}}

\newcommand{\beq}{\begin{equation}}
\newcommand{\eeq}{\end{equation}}

\newcommand{\nv}{\nu}

\newcommand{\tr}{\rmop{tr}}

\newcommand{\p}{\partial}

\newcommand{\Rbb}{\mathbb{R}}
\newcommand{\Sbb}{\mathbb{S}}

\newcommand{\Xfrak}{\mathfrak{X}}

\newcommand{\Ga}{\alpha}

\newcommand{\Gve}{\varepsilon}

\newcommand{\Gg}{\gamma}

\newcommand{\Gn}{\eta}

\newcommand{\Gv}{\nu}

\newcommand{\Gt}{\theta}

\newcommand{\Gs}{\sigma}
\newcommand{\Gsc}{\sigma_\mathrm{c}}
\newcommand{\Gsm}{\sigma_\mathrm{m}}
\newcommand{\Gss}{\sigma_\mathrm{s}}

\newcommand{\Gj}{\tau}

\newcommand{\Go}{\omega}

\newcommand{\GD}{\Delta}

\newcommand{\GG}{\Gamma}

\newcommand{\GO}{\Omega}

\def\XXint#1#2#3{{\setbox0=\hbox{$#1{#2#3}{\int}$}
    \vcenter{\hbox{$#2#3$}}\kern-.5\wd0}}

\newtheorem{prop}{Proposition}[section]
\newtheorem{theo}[prop]{Theorem}

\newtheorem{lemm}[prop]{Lemma}
\newtheorem{rema}[prop]{Remark}

\theoremstyle{definition}

\newtheorem*{note*}{Note}
\newtheorem*{claim*}{Claim}

\newtheorem*{rema*}{Remark}
\newtheorem*{exer*}{Exercise}
\newtheorem*{prob*}{Problem}

\numberwithin{equation}{section}

\begin{document}

\title{Ellipsoidal characterization of neutral inclusions for imperfect bonding of high-conductivity type\thanks{Y. Ji was supported by KIAS Individual Grant no. MG089002 at Korea Institute for Advanced Study. S. Sakaguchi was partially supported by JSPS KAKENHI Grant Number JP26K06856. X. Li was supported by Zhejiang Provincial Natural Science Foundation of China under Grant No. LMS25A010005.}}

\author{
	Junyong Eom\thanks{
		Division of Pure and Applied Science, Graduate School of Science and Technology,
		Gunma University, Gunma 376-8515, Japan.
		Emails: \texttt{eom@gunma-u.ac.jp} (J. Eom) and
		\texttt{shota.fukushima@gunma-u.ac.jp} (S. Fukushima).
	}
	\and
	Shota Fukushima\footnotemark[2]
	\and
	Yong-Gwan Ji\thanks{
		School of Mathematics, Korea Institute for Advanced Study,
		Seoul 02455, S. Korea.
		Email: \texttt{ygji@kias.re.kr}.
	}
	\and
	Hyeonbae Kang\thanks{
		Department of Mathematics and Institute of Applied Mathematics,
		Inha University, Incheon 22212, S. Korea.
		Email: \texttt{hbkang@inha.ac.kr}.
	}
	\and
	Xiaofei Li\thanks{
		School of Mathematical Sciences, Zhejiang University of Technology,
		Hangzhou, 310023, P. R. China.
		Email: \texttt{xiaofeilee@hotmail.com}.
	}
	\and
	Shigeru Sakaguchi\thanks{
		Admissions Center, Tohoku University, Sendai 980-8576, Japan.
		Email: \texttt{sigersak@tohoku.ac.jp}.
	}
}

\maketitle

\begin{abstract}
This paper concerns neutral inclusions for imperfect bonding of high-conductivity type. An inclusion, which is a bounded domain, is said to be of imperfect bonding of high-conductivity type if the flux is discontinuous along its boundary while the potential is continuous. The inclusion is neutral to a uniform field if the presence of the inclusion does not perturb the field outside the inclusion. It is known that ellipses and ellipsoids can be neutral to all uniform fields by introducing a proper imperfect bonding coefficient on boundaries. The purpose of this paper is to prove the converse. We prove that if an inclusion of imperfect bonding of high-conductivity type is neutral to all uniform fields, then it is an ellipse or an ellipsoid. The neutrality condition is given by existence of the solution to a certain differential equation on the boundary surface and the main result is proved by converting the neutrality condition into an algebraic boundary identity characterizing ellipses and ellipsoids. 
\end{abstract}

\noindent{\footnotesize \textbf{MSC2020. }Primary 35N25; Secondary 53C24, 53A05, 35Q60.}

\noindent{\footnotesize \textbf{Keywords:} neutral inclusions; imperfect bonding; ellipsoidal rigidity; Codazzi tensor.}

\tableofcontents

\section{Introduction and statements of results} \label{sec:intro}

This paper concerns inclusions with imperfect bonding interface properties of high-conductivity type which is characterized by continuity of the potential and discontinuity of the flux along the boundary. We prove that if such an inclusion is neutral to multiple uniform fields, then it is of elliptic or ellipsoidal shape.

To describe results in a precise manner, let $D \subset \Rbb^d \ (d=2,3)$ be a bounded domain with Lipschitz boundary, which will be referred to as an inclusion. Let
\begin{align}\label{def_sigma}
\Gs(x) = \begin{cases}
\Gsc \quad &x \in D, \\
\Gsm \quad &x \in \Rbb^d \setminus \overline{D},
\end{cases}
\end{align}
where $\Gsc$ and $\Gsm$ are symmetric positive-definite constant matrices such that $\Gsc \neq \Gsm$. This $\Gs$ depicts a two-phase material with constant conductivity matrices in the inclusion and the background. We consider the differential equation
\beq\label{PDE}
\operatorname{div} (\sigma \nabla u)=0 \ \text{ in } \Rbb^d\setminus \partial D
\eeq
with the decay condition at $\infty$:
\beq\label{decay}
u(x)-\jbk{c, x}= O(|x|^{1-d}) \ \text{ as } |x|\to \infty,
\eeq
where $c \in \Rbb^d$ is a constant vector. Two kinds of interface conditions can be assigned on $\p D$: the perfect and imperfect bonding conditions. We say $\p D$ satisfies the perfect bonding condition if the potential and the flux are continuous along $\p D$, namely,
\beq\label{PB}\tag{PB}
u|_+=u|_-,  \ \jbk{\Gsc \nabla u|_-, \nu} = \jbk{\Gsm \nabla u|_+, \nu} \ \text{ on } \partial D.
\eeq
Here and throughout this paper the subscript $-(+)$ stands for the limit to $\p D$ from the inside(outside) $D$ and $\nu$ for the outward unit normal to $\p D$. The imperfect bonding condition is characterised by either discontinuity of the potential or the flux along $\p D$. If the potential is discontinuous along $\p D$, it is called the Low-Conductivity (LC) type and described as
\beq\label{LC}\tag{LC}
u|_+ - u|_- =  \Gg\jbk{\Gsm \nabla u|_+, \Gv},  \ \jbk{\Gsc \nabla u|_-, \nu} = \jbk{\Gsm \nabla u|_+, \nu} \ \text{ on } \partial D
\eeq
for some function $\Gg$ on $\p D$. If the flux is discontinuous along $\p D$, it is called the High-Conductivity (HC) type and described as
\beq\label{HC}\tag{HC}
u|_+ = u|_- ,  \ \jbk{\Gsc \nabla u|_-, \nu} - \jbk{\Gsm \nabla u|_+, \nu}= \operatorname{div}_{S} (\alpha \nabla_{S} u) \ \text{ on } \p D,
\eeq
for some function $\Ga$ on $\p D$, where $\operatorname{div}_{S}$ and $\nabla_{S}$ stand for the surface divergence and gradient on $\p D$, respectively, where definitions are ginven in Section \ref{sect:surface:geometry}.

We now turn to describe the notion of neutrality. For a fixed nonzero vector
\(c\in\mathbb R^d\), the triple $(D, \Gs, \Ga)$ is said to be HC-type neutral to the uniform field $-c$ if there exists a solution $u$ to \eqref{PDE} with imperfect bonding interface condition \eqref{HC} such that
\beq\label{neutral}
u(x) \equiv\jbk{c, x}, \ \ x \in \Rbb^d \setminus \overline{D},
\eeq
instead of \eqref{decay}. This means that the presence of inclusion does not perturb the uniform field outside the inclusion. If this holds for every \(c\in\mathbb R^d\), then \((D,\sigma,\Ga)\) is said to be HC-type neutral to \textit{multiple} uniform fields. Similarly, the triple \((D,\sigma,\Gg)\) is said to be LC-type neutral
to a uniform field, or to multiple uniform fields, if the same condition
holds with \eqref{HC} replaced by \eqref{LC}.

The imperfect bonding conditions of LC- and HC-type were derived in \cite{BM1999} as limits of the core-shell structures when the thickness of the shell tends to $0$ in two different ways: the conductivity of the shell tends to zero or to infinity. The former regime gives rise to \eqref{LC}, whereas the latter to \eqref{HC}. It is shown in the same paper that, under suitable conditions, ellipsoids can be neutral to multiple uniform fields for both LC- and HC-type interfaces. We present the explicit relations of the ellipsoids with the conductivities and the function $\Ga$ in Section \ref{sec:explicit}. We also provide an elementary proof of neutrality in the same section. We mention in particular that if \((D,\sigma,\Ga)\) ($D$ is an ellipsoid) is HC-type neutral to multiple uniform fields, then $\Gsm-\Gsc$ is either positive- or negative-definite, and $\Ga>0$ if $\Gsm-\Gsc>0$, $\Ga<0$ if $\Gsm-\Gsc<0$. The corresponding result for balls was proved earlier in \cite{T.R}.

There are LC- and HC-type neutral inclusions of non-ellipsoidal shape to a single uniform field; see \cite{BM1999}. It is helpful to mention that the two-phase material with the perfect bonding condition is never neutral. In fact, the solution $u$ admits the dipole expansion
$$
u(x)-\jbk{c, x} = \frac{1}{\Go_d} \frac{\jbk{x, Mc}}{|x|^d} + O(|x|^{-d}) \ \mbox{ as } |x| \to \infty,
$$
where $M$ is a matrix and $\Go_d=2\pi$ if $d=2$ and $4\pi$ if $d=3$. The matrix $M$, which is called the polarization tensor, is positive- or negative-definite depending on positivity or negativity of $\Gsm-\Gsc$ (see, for example, \cite{AK07}).

In this paper, we address the converse question: is there a shape other than an ellipsoid which is LC- or HC-type neutral to multiple uniform fields? The answer is no. We prove the following theorem.

\begin{theo}\label{characterization_HC}
Let $D\subset \Rbb^d$ ($d=2,3$) be a bounded domain with connected $C^2$ boundary $\partial D$. In the case \(d=3\), we further assume that $\p D$ is of genus zero. If there exists $\Ga\in C^1(\partial D)$ with $\Ga >0$ (resp. $\Ga<0$) such that $(D, \sigma, \Ga)$ is HC-type neutral to multiple uniform fields, then $A$ defined by
	\begin{align}\label{def_A}
	A := \frac{\tr \left(\Gsm - \Gsc\right)}{d-1}\operatorname{Id}-\left(\Gsm - \Gsc\right)
	\end{align}
is positive-definite (resp. negative-definite), and
$\p D$ is an ellipsoid (or an ellipse) of the form
	\begin{equation}\label{eq:HC:ellipsoid}
	\p D = \left\{x \in \Rbb^d \;|\; \jbk{x, Ax}-2\jbk{b, x}=c_0 \right\},
	\end{equation}
	for some constant vector $b\in \Rbb^d$ and constant $c_0\in \Rbb$.

\end{theo}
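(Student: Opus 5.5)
The plan is to convert neutrality into pointwise identities on $\p D$, upgrade them to the algebraic identity $\Ga W = PAP$ on tangent planes (with $W$ the shape operator and $P=\operatorname{Id}-\nu\nu^T$), and then integrate that identity to get the quadric \eqref{eq:HC:ellipsoid}.

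\emph{Step 1: the boundary equations.} Fix $c$ and let $u$ be the neutral solution. Since $u|_-=u|_+=\jbk{c,x}$ on $\p D$, and $\jbk{c,x}$ itself solves $\operatorname{div}(\Gsc\nabla u)=0$, uniqueness for the interior Dirichlet problem gives $u\equiv\jbk{c,x}$ in $D$ as well. Put $B:=\Gsm-\Gsc$. Then \eqref{HC} reads
\[
-\jbk{Bc,\nu}=\operatorname{div}_S(\Ga P c).
\]
I will use $\operatorname{div}_S(Pc)=-H\jbk{c,\nu}$, where $H=\tr W$ and $W=\nabla_S\nu$. Since this holds for every $c$, it becomes the vector identity $\nabla_S\Ga-\Ga H\nu=-B\nu$. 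Splitting into tangential and normal parts gives
\[
\nabla_S\Ga=-PB\nu,\qquad \Ga H=\jbk{B\nu,\nu}.
\]

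\emph{Step 2: the tensor identity $\Ga W=PAP$ on $T_x\p D$.} Set $\lambda=\tr B/(d-1)$, so that $A=\lambda\operatorname{Id}-B$. Define the symmetric tangential tensor
\[
S:=\Ga W+PBP-\lambda P .
\]
Its trace is $\Ga H+\tr B-\jbk{B\nu,\nu}-(d-1)\lambda=0$ by the normal equation, so $S$ is traceless. For $d=2$ the tangent space is one-dimensional, hence $S=0$ immediately.

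For $d=3$ I would show that $S$ is a Codazzi tensor, i.e.\ $(\nabla_XS)Y=(\nabla_YS)X$. The two ingredients are the Codazzi equation for $W$ and the Hessian of $\Ga$. The latter comes from differentiating $\nabla_S\Ga=-PB\nu$ tangentially, which gives $\nabla^2\Ga(X,Y)=-\jbk{BWX,Y}+\jbk{B\nu,\nu}\jbk{WX,Y}$. Its symmetry also yields the commutation relation $W(PBP)=(PBP)W$, which should be needed in the computation. A traceless Codazzi tensor on a genus-zero closed surface vanishes: it is the real part of a holomorphic quadratic differential, and $S^2$ carries none. Hence $S=0$, that is, $\Ga W=PAP$ on each tangent plane.

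I expect this Codazzi verification to be the main obstacle. The terms $\nabla(\Ga W)$ and $\nabla(PBP)$, which brings in the derivative of $\nu$, must cancel exactly. If the direct computation gets messy, I would first try checking $\operatorname{div}S=0$, which for a traceless symmetric tensor in dimension two is equivalent to the Codazzi property.

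\emph{Step 3: integration to a quadric.} Consider $w:=Ax-\Ga\nu$ on $\p D$. For tangent $X$,
\[
X(w)=AX-X(\Ga)\nu-\Ga WX=AX-X(\Ga)\nu-PAX=\jbk{A X,\nu}\nu-X(\Ga)\nu .
\]
Since $\jbk{A\nu,X}=-\jbk{B\nu,X}=X(\Ga)$ for tangent $X$, this vanishes. Hence $w\equiv b$ is constant on the connected surface $\p D$. It follows that $Ax-b=\Ga\nu$, so the tangential gradient of $\jbk{x,Ax}-2\jbk{b,x}$, namely $2P(Ax-b)$, is zero. Therefore $\jbk{x,Ax}-2\jbk{b,x}$ equals a constant $c_0$ on $\p D$, which is \eqref{eq:HC:ellipsoid}.

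For definiteness, a closed $C^2$ surface has a point where $W$ is definite with the sign of the outward normal convention. If $\Ga>0$, then $\Ga W=PAP$ makes $A$ positive on that tangent plane. Since $\p D$ lies in a bounded level set of the quadric, that level set is bounded, which forces $A$ to be definite, hence positive-definite. The case $\Ga<0$ is symmetric and gives $A$ negative-definite.
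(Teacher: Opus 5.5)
\textbf{Overall.} Your architecture is the paper's own. With your convention $W=\nabla_S\nu$ (the negative of the paper's shape operator), your $S$ is exactly $-T$, where $TX=D_XF=PAX+\alpha W_{\mathrm{paper}}X$ and $F=Ax-\alpha\nu$ is your $w$. The vanishing theorem for traceless Codazzi tensors on a topological sphere (Theorem~\ref{theo:trace-free-Codazzi-sphere}) and the integration step (Lemma~\ref{lemm:chracterization:ellipsoid}) are also the same. Your uniform handling of $d=2$ (a traceless endomorphism of a line is zero) is a tidy substitute for the paper's arclength computation.

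\textbf{The gap: regularity for $d=3$.} Under the stated hypotheses $\partial D\in C^2$, $\alpha\in C^1$, the tensor $S$ is merely continuous. So the Codazzi condition has no classical meaning, the Codazzi--Mainardi equation for $W$ that your computation uses requires $\partial D\in C^3$, and the holomorphic-quadratic-differential argument needs $S$ to be differentiable. You must first upgrade regularity. The paper does this by bootstrapping your two boundary equations:
\begin{itemize}
\item if $\partial D\in C^{k,\theta}$, then $\nabla_S\alpha=-PB\nu\in C^{k-1,\theta}$ gives $\alpha\in C^{k,\theta}$;
\item since $\alpha\neq 0$, the normal equation gives $H=\langle B\nu,\nu\rangle/\alpha\in C^{k-1,\theta}$;
\item Schauder estimates for the quasilinear, uniformly elliptic mean-curvature equation then give $\partial D\in C^{k+1,\theta}$ (when $\theta=0$ one first gets $C^{k,\tau}$ for every $\tau<1$).
\end{itemize}
Iterating yields $\partial D,\alpha\in C^\infty$. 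Your $d=2$ argument and your Step~3 are fine at the given regularity.

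\textbf{The Codazzi step.} You flag this as the main obstacle, but the shortcut is already in your Step~3. Run that computation without assuming $S=0$. The normal terms cancel because $\langle A\nu,X\rangle=-\langle B\nu,X\rangle=X(\alpha)$, and you get $D_Xw=-SX$ for every tangent $X$. So $S$ is minus the differential of a map whose derivatives are tangential. Applying $P$ to $D_XD_Yw-D_YD_Xw-D_{[X,Y]}w=0$ and using torsion-freeness gives $(\nabla_XS)Y=(\nabla_YS)X$ at once; this is exactly how the paper proceeds.

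Your direct route also closes, and more easily than you expect:
\begin{itemize}
\item $\nabla P=0$ on tangent fields;
\item $\alpha\bigl((\nabla_XW)Y-(\nabla_YW)X\bigr)$ vanishes by Codazzi--Mainardi;
\item the antisymmetrization of $\nabla(PBP)$ equals $\langle B\nu,X\rangle WY-\langle B\nu,Y\rangle WX$, which cancels $X(\alpha)WY-Y(\alpha)WX$ because $X(\alpha)=-\langle B\nu,X\rangle$.
\end{itemize}
Neither the Hessian of $\alpha$ nor the commutation $W(PBP)=(PBP)W$ is needed.

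\textbf{Minor point.} Your sentence about the level set being bounded is circular as written. The right reason is that, with $f=\langle x,Ax\rangle-2\langle b,x\rangle$, one has $\nabla f=2\alpha\nu\neq0$ on $\partial D$. Hence $\partial D$ is a compact connected component of $\{f=c_0\}$, and a quadric with indefinite or degenerate $A$ has no compact components.
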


Suppose that $(D, \Gs, \Ga)$ is HC-type neutral to multiple uniform fields. Then the solution to \eqref{PDE} with \eqref{decay} satisfies \eqref{neutral}.
By the first condition in \eqref{HC}, $u(x)= \jbk{c,x}$ in $\Rbb^d$. By the second condition in \eqref{HC}, we have
\begin{equation}\label{1000}
\jbk{(\Gsc -\Gsm) c, \nu} = \operatorname{div}_S (\alpha \nabla_S \jbk{c,x})
\end{equation}
for all $c \in \Rbb^d$.
We prove that this relation implies
\beq\label{3000}
Ax-\alpha\nu = \text{const.} \quad\text{on } \p D
\eeq
with $A$ defined in \eqref{def_A}. This property guarantees that $A$ is positive- or negative-definite depending on the sign of $\Ga$ and $\p D$ is an ellipsoid (see Lemma \ref{lemm:chracterization:ellipsoid}).

It is easy to see that LC-type neutral inclusions are ellipsoids.
In fact, if $(D, \sigma, \Gg)$ is LC-type neutral to multiple uniform fields, one can see from \eqref{LC} that
$$
(\Gsm^{-1}-\Gsc^{-1}) x - \Gg \nu= \text{const.} \quad\text{on } \p D,
$$
and hence $\p D$ is an ellipsoid. This was also proved in \cite[Appendix]{KL2019} using a different characterization of ellipsoids.

There is yet another way to achieve neutral inclusions; by means of core-shell structures (three-phase material). Let $D$ and $\GO$ be bounded domains in $\Rbb^d$ such that $\overline{D} \subset \GO$. Then $D$ is the core and $\GO \setminus \overline{D}$ is the shell. The conductivities of the core and shell are assumed to be scalars denoted by $\Gsc$ and $\Gss$; that of the matrix is a symmetric positive-definite matrix denoted by $\Gsm$. Such core-shell structures can be neutral to multiple uniform fields: it is proved for concentric balls with isotropic $\Gsm$ in \cite{H1} and for confocal ellipsoids with anisotropic $\Gsm$ in \cite{kerker} (see also \cite{JKLS} for explicit relations among the confocal ellipsoids and the conductivities that ensure neutrality). The question of the converse has been looked into. It is proved in \cite{KL2d} (in \cite{MS} for the case when $\Gsc=0$ or $\infty$) that in two-dimensions the only core-shell structure neutral to multiple uniform fields is of the form of confocal ellipses. The three-dimensional case remains unsolved. The only case solved is when $\Gsm$ is isotropic and $\Gsc > \Gss$. In such a case, it is proved in \cite{KLS3d} that core-shell structure is of the form of concentric balls. We refer to a review paper \cite{JKLS} for a related over-determined problem for confocal ellipsoids.

It is worthwhile to mentioning recent developments of gradient estimates when the interface condition is given by either the perfect bonding or the imperfect bonding of LC-type with a constant bonding coefficient $\Gg$. If inclusions whose conductivity is $\infty$ (perfect conductor) with the perfect bonding interface condition are closely located, then the gradient of the solution becomes arbitrarily large as the distance $\Gve$ between inclusions tends to zero. There is a huge literature on this subject for which we conveniently refer to recent surveys \cite{DY, Kang} and references therein. We also refer to \cite{DLY24, DLY25} for recent development on the insulation case. Recently the following question is investigated: what happens if the perfect bonding condition is replaced with LC-type imperfect bonding condition. In the recent paper \cite{FJKL}, the case when inclusions are two-dimensional discs of the same radius has been considered. It is proved that if the imperfect bonding coefficient $\Gg$ is positive, then the gradient remains bounded uniformly in $\Gve$. This result has been generalized in \cite{DYZ26} to include higher dimensions and more general shapes. In \cite{DLZ}, precise gradient estimates are obtained in terms of $\Gg$ and $\epsilon$. It is proved in \cite{FJK} that, as $\gamma$ tends to zero, the solution corresponding to the imperfect bonding coefficient $\gamma>0$ converges in various spaces to the solution corresponding to $\gamma=0$, namely, the solution of the perfect bonding problem.

This paper is organized as follows. In Section 2, we collect basic notions and facts of surface geometry to be used in later sections. In Section 3, we review the fact from \cite{BM1999} that ellipsoids can be HC-type neutral to multiple fields. We include a proof which is simpler than the proof in \cite{BM1999}. Section 4 is devoted to the proof of Theorem \ref{characterization_HC}.

\section{Basic facts on surface geometry}\label{sect:surface:geometry}
In this section, we collect basic notions and facts from surface geometry. See, for example, \cite[Chapter 8]{Lee2018}. For simplicity of presentation, let $\Gamma \subset \mathbb{R}^d$ be an oriented closed $C^\infty$-surface. All the formulas established here remain valid for surfaces with lower regularity, subject to the minimum regularity assumptions at each step. 

We denote by \(\mathfrak{X}(\GG)\) the space of $C^{\infty}$ tangent vector fields on \(\Gamma\), namely,
\begin{align}\label{def:frakX}
\mathfrak{X}(\GG)
:=
\left\{
X \in C^{\infty}(\GG, \Rbb^d) \;:\; X(x)\in T_x\GG
\text{ for every } x\in\Gamma
\right\},
\end{align}
where $T_x \Gamma$ is the tangent space at $x \in \GG$.

For an $\Rbb^m$-valued function $F\in C^1 (\Gamma, \Rbb^m)$ ($m \geq 1$) and $v\in T_x \Gamma$, we define the differential of $F$ along $v$ by
\begin{equation}\label{eq:diff:defi}
D_v F (x): =\left.\frac{d}{dt} F(r(t)) \right|_{t=0}\in \Rbb^m,
\end{equation}
where \(r\) is any smooth curve on \(\Gamma\) satisfying \(r(0)=x\) and \(r'(0)=v\). Note that
this definition is independent of the choice of the curve $r$.  For $X \in \mathfrak{X}(\GG)$, we write
\[
(D_XF)(x):=\left(D_{X(x)}F\right)(x),\qquad x\in\Gamma.
\]

The first and second fundamental forms of \(\Gamma\) are defined by
\[
\mathrm{I}(X,Y):=\langle X,Y\rangle,
\qquad
\mathrm{II}(X,Y):=\langle D_XY,\nu\rangle,
\qquad X,Y\in\mathfrak X(\Gamma),
\]
where the equalities are understood pointwise on \(\Gamma\).

Let
\beq\label{defP}
P=P_x:=\operatorname{Id}-\nu_x\nu_x^T: \Rbb^d \longrightarrow T_x \Gamma
\eeq
be the orthogonal projection onto \(T_x\Gamma\). For $X, Y\in \Xfrak (\Gamma)$, we define
\beq\label{covform}
\nabla^\Gamma_X Y:=P(D_X Y).
\eeq
This $\nabla^\Gamma$ satisfies the following fundamental properties, which characterize $\nabla^\Gamma$ as the Levi--Civita connection associated with the metric $\operatorname{I}$.

\begin{itemize}
	\item (Compatibility) For $X$, $Y$, $Z\in \Xfrak (\Gamma)$,
	\begin{equation} \label{eq:metric_compatibility}
	D_X (\operatorname{I} (Y,Z)) =
	\operatorname{I} (\nabla^\Gamma_XY, Z)+ \operatorname{I} (Y,\nabla^\Gamma_X Z).
	\end{equation}
	
	\item (Torsion-free property) For $X, Y\in \Xfrak (\Gamma)$,
	\begin{equation}
	\label{eq:torsion:free}
	\nabla^\Gamma_X Y -\nabla^\Gamma_Y X = [X, Y],
	\end{equation}
	where \([\cdot,\cdot]\) denotes the Lie bracket of vector fields on
	\(\Gamma\).
\end{itemize}

For a \((0,2)\)-tensor $B: \mathfrak{X} (\GG) \times \mathfrak{X} (\GG) \to C^{\infty}(\GG)$, we denote by $T_B: \mathfrak{X} (\GG) \to \mathfrak{X} (\GG)$ the
corresponding \((1,1)\)-tensor obtained by raising an index with respect to
\(\mathrm{I}\), that is,
\[
\mathrm{I}(T_B X,Y)=B(X,Y),
\qquad X,Y\in\mathfrak X(\Gamma).
\]
The \((1,1)\)-tensor corresponding to the second fundamental form \(\mathrm{II}\) is called the shape operator, also called the Weingarten map, and we denote it by \(W\):
\begin{align}\label{def:shapeop}
W:= T_{\mathrm{II}}.
\end{align}

For a \((0,2)\)-tensor $B$, we define its trace at each $x\in\Gamma$
to be the trace of the corresponding $(1,1)$-tensor $T_B$, \textit{i.e.}, 
\begin{align}
\left(\operatorname{tr}_S B\right)(x) := \left(\operatorname{tr}_S T_B\right)(x). 
\end{align}
The subscript $S$ is used to emphasize that the trace is taken over the tangent space $T_x\Gamma$, rather than over the ambient space $\mathbb{R}^d$. If we take an orthonormal basis $\{ e_j\}_{j=1}^{d-1}$ of $T_x \Gamma$, then this trace is computed by $\sum_{j=1}^{d-1} \mathrm{I}(T_B e_j,e_j)$. Now let $A:\mathbb{R}^d\to\mathbb{R}^d$ be a constant matrix. For each $x \in T_x \Gamma$, $PA$ is a linear map on $T_x \Gamma$. Since $Pe_j = e_j$, its trace is given by
\begin{equation}\label{eq:trace:Gamma}
\begin{aligned}
\operatorname{tr}_S PA = \sum_{j=1}^{d-1}\jbk{PAe_j,e_j} = \sum_{j=1}^{d-1}\jbk{Ae_j,e_j} = \tr A -  \jbk{\Gv, A \Gv},
\end{aligned}
\end{equation}
where $\tr$ denotes the usual trace of $A$ as a linear map on $\mathbb{R}^d$. 

The shape operator $W$ is symmetric with respect to the metric $\operatorname{I}$, \textit{i.e.},
\begin{align}
\mathrm{I}(WX,Y) = \mathrm{I}(X,WY).
\end{align}
This fact implies that all eigenvalues of $W$ are real. The arithmetic average of the eigenvalues of $W$, counted with multiplicities, is the mean curvature, namely,
\begin{equation}\label{def:mean_curvature}
H(x):=\frac{\left(\operatorname{tr}_S W\right)(x)}{d-1}.
\end{equation}
With this convention, a sphere oriented by its outward unit normal has negative mean curvature.
The shape operator also appears as the differential of the unit normal vector field:
\begin{equation}\label{eq:Gauss:der}
WX = - D_X \nu.
\end{equation}
Therefore, we have
\beq\label{mean}
\nabla \cdot \Gv = -\left( d-1 \right)H.
\eeq

We now cite a result on a Codazzi tensor that plays a crucial role in the proof. For a \((0,2)\)-tensor $B$, the covariant derivative is defined by
		\begin{align}\label{def:corvariant_deri:B}
		\left(\nabla_X^{\GG} B\right)(Y,Z):= D_X(B(Y,Z)) - B(\nabla_{X}^{\GG} Y, Z) - B(Y, \nabla_{X}^{\GG} Z).
		\end{align}
$B$ is said to be a Codazzi tensor if
		\begin{align}
		\left(\nabla_X^{\GG} B\right)(Y,Z) = \left(\nabla_Y^{\GG} B\right)(X,Z).
		\end{align}

The following theorem can be found in \cite{LSW} (see also \cite{Fox2013}).

\begin{theo}\label{theo:trace-free-Codazzi-sphere} If $\GG$ is $C^\infty$-diffeomorphic to the unit sphere $\Sbb^2$ and $B$ is symmetric, trace-free, and Codazzi $(0,2)$-tensor, then $B = 0$.
\end{theo}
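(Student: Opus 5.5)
The plan is Hopf's classical argument: a symmetric, trace-free Codazzi tensor on a surface is the real part of a holomorphic quadratic differential, and the Riemann sphere carries no nonzero holomorphic quadratic differentials.

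\textbf{Step 1 (conformal coordinates).} Since $\GG$ is diffeomorphic to $\Sbb^2$, the uniformization theorem makes $(\GG,\mathrm{I})$ conformally equivalent to the Riemann sphere $\mathbb{C}\cup\{\infty\}$. We therefore get two isothermal charts, $z=x+iy$ covering $\GG$ minus one point and $w=1/z$ covering $\GG$ minus another point. In these charts the metric is $\mathrm{I}=\lambda\,(dx^2+dy^2)$ with $\lambda>0$ smooth, and similarly in $w$. In the $z$-chart write $B_{11}=-B_{22}=a$ and $B_{12}=B_{21}=b$, using symmetry and $\tr_S B=\lambda^{-1}(B_{11}+B_{22})=0$. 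Set
\[
\varphi := a - i b, \qquad \Phi:=\varphi\,dz^2 .
\]
A direct check shows that $\Phi$ is independent of the conformal chart, because $B=\operatorname{Re}(\Phi)$ for trace-free $B$.

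\textbf{Step 2 (Codazzi gives holomorphy).} Tracing the Codazzi identity over $X$ and $Z$ gives $\operatorname{div} B = d(\tr_S B)=0$, so $B$ is divergence-free. For a conformal metric in two dimensions, the Christoffel symbols are $\Gamma^k_{ij}=\tfrac12(\delta_{ik}\partial_j\log\lambda+\delta_{jk}\partial_i\log\lambda-\delta_{ij}\partial_k\log\lambda)$. Substituting these, the connection terms in the divergence of a trace-free symmetric tensor cancel. This leaves
\[
\partial_x a+\partial_y b=0,\qquad \partial_x b-\partial_y a=0,
\]
which are exactly the Cauchy--Riemann equations for $\varphi=a-ib$. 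Hence $\varphi$ is holomorphic on $\mathbb{C}$.

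\textbf{Step 3 (Liouville).} On the overlap of the two charts, $dz=-w^{-2}dw$, so $\Phi=\varphi(1/w)\,w^{-4}\,dw^2$. In the $w$-chart the coefficient $\psi(w)=\varphi(1/w)w^{-4}$ is likewise holomorphic, and in particular bounded near $w=0$. Therefore $|\varphi(z)|=O(|z|^{-4})$ as $|z|\to\infty$. By Liouville's theorem $\varphi$ is constant, and the decay forces $\varphi\equiv0$. Thus $a=b=0$ on $\GG$ minus one point, and by continuity $B=0$ everywhere.

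I expect the main obstacle to be Step 2. One must verify carefully that the trace-free hypothesis is what makes all the $\partial\log\lambda$ terms cancel, so that the Codazzi equations reduce exactly to the Cauchy--Riemann equations. One must also confirm the chart-independence of $\Phi$ used in Step 3, that is, that $\varphi$ transforms like the coefficient of $dz^2$. Both are short coordinate computations. The remaining ingredient, the existence of global isothermal coordinates, is the uniformization theorem, which I take as known.
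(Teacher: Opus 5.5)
Your argument is correct and complete. It is Hopf's holomorphic-quadratic-differential proof, which is the standard argument behind the result; the paper gives no proof of its own and cites it from \cite{LSW} (see also \cite{Fox2013}). The computation you flag as the main obstacle goes through: for $g=\lambda\delta$ in dimension two one has $\sum_i\Gamma^l_{ii}=0$ and $\sum_{i,l}\Gamma^l_{ij}B_{il}=\tfrac12\partial_j(\log\lambda)\,(B_{11}+B_{22})=0$, so $\operatorname{div}B=0$ reduces exactly to the Cauchy--Riemann system for $a-ib$.
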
	

For $f\in C^2 (\Gamma, \Rbb)$ and $F\in C^1 (\Gamma, \Rbb^d)$, which is not necessarily tangential to $\partial D$, surface gradient $\nabla_S$, surface divergence $\operatorname{div}_S$, and surface Laplacian (also called Laplace--Beltrami operator) $\GD_S$ are defined by
\begin{align}
\label{eq:surface:grad:Cartesian}  \nabla_S f &= \nabla f - \jbk{\nu, \nabla f} \nu, \\
\label{eq:surface:div:Cartesian}  \operatorname{div}_S F &= \operatorname{div} F - \jbk{\nu, (\nabla F)\nu}, \\
\GD_S f \label{eq:surface:Laplacian:Cartesian} &= \operatorname{div}_S \nabla_S f = \GD f - \jbk{\Gv, (\nabla^2 f) \Gv} - \left( \nabla \cdot \Gv\right) \p_\Gv f ,
\end{align}
where $f$ is extended as a $C^2$-function and $F$ and $\nu$ are extended as $C^1$-functions in a neighborhood
of $\Gamma$, and $\nabla^2 f$ is the Hessian matrix of $f$ (see, for example, \cite{Reilly}). These expressions are independent of the chosen extensions. If $F$ is tangential to $\partial D$, then $\operatorname{div}_S F$ corresponds to the intrinsic surface divergence, which is defined purely in terms of the first fundamental form on $\Gamma$. The following product rule for the surface differential operator holds:
\begin{align}\label{eq:prod:surf:div}
 \operatorname{div}_S \left(\Ga \nabla_S f\right) = \jbk{\nabla_S \Ga, \nabla_S f}  + \Ga \GD_S f.
\end{align}

\section{Neutrality relation for ellipsoids}\label{sec:explicit}

After a rotation, if necessary, we may assume that $\Gsm-\Gsc$ is diagonal, namely,
$$
\Gsm - \Gsc = \operatorname{diag}(q_1 , \ldots, q_d) \quad  (d=2,3).
$$
Let $\p E$ be the ellipsoid defined by
\beq \label{def_ellipsoid}
	\p E = \left\{ x \in \Rbb^d \; | \;\sum_{j=1}^{d} \frac{x_j^2}{r_j^2} = 1 \right\} \quad  (d=2,3). 	
\eeq
The relation of $\Gsm - \Gsc$ and the ellipsoid $\p E$ for HC-type neutrality is given by
\beq\label{HCrel}
q_1 \left( \sum_{i \neq 1} \frac{1}{r_i^2} \right)^{-1} = \ldots = q_d \left( \sum_{i \neq d} \frac{1}{r_i^2} \right)^{-1}.
\eeq
If this relation holds, the function $\Ga$ is defined by
\beq\label{alpha_ellipsoid}
\Ga(x) = q_k \left( \sum_{i \neq k} \frac{1}{r_i^2} \right)^{-1} w(x),
\eeq
where
\beq\label{def_w}
	w(x) = \sqrt{\sum_{j=1}^{d}\frac{x_j^2}{r_j^4}}.
\eeq

\begin{theo}\label{HCell}
$(E, \Gs, \Ga)$ is HC-type neutral to multiple uniform fields.
\end{theo}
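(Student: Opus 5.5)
The plan is to show directly that $u(x)=\jbk{c,x}$ on all of $\Rbb^d$ satisfies every requirement, for every $c\in\Rbb^d$. This $u$ is harmonic for any constant $\Gs$, so \eqref{PDE} holds on both sides of $\p E$. It is continuous across $\p E$, and \eqref{neutral} holds trivially. The only thing to check is therefore the flux condition in \eqref{HC}, which in this situation is exactly \eqref{1000}:
$$\jbk{(\Gsc-\Gsm)c,\nu}=\operatorname{div}_S(\Ga\nabla_S\jbk{c,x})\quad\text{on }\p E,\ \text{for all }c.$$

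First I reduce \eqref{1000} to a vector identity on $\p E$. We have $\nabla_S\jbk{c,x}=Pc=c-\jbk{\nu,c}\nu$. Apply \eqref{eq:surface:div:Cartesian} with $\nu$ extended, together with \eqref{mean}, and use $\jbk{\nabla_S\jbk{\nu,c},\nu}=0$. This gives $\operatorname{div}_S(Pc)=(d-1)H\jbk{\nu,c}$. By the product rule \eqref{eq:prod:surf:div}, \eqref{1000} is then equivalent to
$$\jbk{\nabla_S\Ga+(d-1)H\Ga\,\nu+Q\nu,\;c}=0\quad\text{for all }c,$$
where $Q=\Gsm-\Gsc=\operatorname{diag}(q_1,\dots,q_d)$. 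Equivalently, we need
$$\nabla_S\Ga+(d-1)H\Ga\,\nu=-Q\nu\quad\text{on }\p E.$$
I will verify its tangential and normal components separately.

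Next I record the explicit data. Write $R^{-2}=\operatorname{diag}(r_1^{-2},\dots,r_d^{-2})$, and let $\lambda$ be the common value in \eqref{HCrel}. Then $\lambda\sum_{i\neq k}r_i^{-2}=q_k$ for each $k$, so
$$Q=\lambda\big((\tr R^{-2})\operatorname{Id}-R^{-2}\big),$$
and by \eqref{alpha_ellipsoid} we have $\Ga=\lambda w$. The unit normal is $\nu=R^{-2}x/w$, and I extend it off $\p E$ by the same formula. Hence $\Ga\nu=\lambda R^{-2}x$.

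The normal component is $(d-1)H\Ga=-\jbk{Q\nu,\nu}$. By \eqref{mean}, $(d-1)H=-\operatorname{div}\nu$. A direct differentiation gives
$$\operatorname{div}\nu=\frac{\tr R^{-2}}{w}-\frac{\jbk{R^{-2}x,R^{-4}x}}{w^3}=\frac{\tr R^{-2}-\jbk{\nu,R^{-2}\nu}}{w}.$$
Therefore
$$(d-1)H\Ga=-\lambda\big(\tr R^{-2}-\jbk{\nu,R^{-2}\nu}\big)=-\jbk{Q\nu,\nu}.$$

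The tangential component is $P\nabla\Ga=-PQ\nu$, with $\Ga=\lambda w$ extended. We have $\nabla w=R^{-4}x/w=R^{-2}\nu$, so $P\nabla\Ga=\lambda PR^{-2}\nu$. On the other side, $P\nu=0$ gives $-PQ\nu=\lambda PR^{-2}\nu$. The two agree, so both components hold.

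The only point needing care is the consistent use of extensions in \eqref{eq:surface:div:Cartesian} and \eqref{mean}. The formulas are extension-independent, so the homogeneous extension of $\nu$ above is legitimate. No genuine obstacle is expected beyond this bookkeeping; the key observation is that \eqref{HCrel} is exactly the statement $Q=\lambda\big((\tr R^{-2})\operatorname{Id}-R^{-2}\big)$.
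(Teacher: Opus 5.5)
Your proposal is correct and is essentially the paper's proof. Both reduce neutrality to checking the flux condition for $u=\jbk{c,x}$ and expand $\operatorname{div}_S(\Ga\nabla_S u)$ by the product rule. Both then verify it with the same extensions $\nu=R^{-2}x/w$ and $\Ga=\lambda w$, using $\nabla w=R^{-2}\nu$ and the explicit formula for $\nabla\cdot\nu$. Your normal/tangential split and the identity $Q=\lambda\big((\tr R^{-2})\operatorname{Id}-R^{-2}\big)$ simply repackage, in matrix form, the paper's coordinate-by-coordinate cancellation.
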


This theorem is proved in \cite{BM1999}. We include a simpler proof in this section. A few remarks are in order before giving the proof.

\begin{rema}
\begin{enumerate}[label=\textnormal{(\roman*)}]
	\item The condition \eqref{HCrel} shows that $\Gsm-\Gsc$ is either positive- or negative-definite. If it is positive-definite, then $\Ga>0$; if it is negative-definite, then $\Ga<0$.
	
	\item $\Gsm-\Gsc$ is isotropic, namely, $q_1=\ldots=q_d$ if and only if $r_1^2=\ldots=r_d^2$, namely, $E$ is a disk or a ball.
	
	\item In two dimensions, the equation \eqref{HCrel} can be solved for $r_j^2$ as long as $q_1, \ q_2$ have the same sign. It means that for given $\Gs$ we can find an ellipse which become neutral with respect to $\Gs$ (and vice versa). However, this is not true in three-dimensions. In addition to definiteness, $\Gsm-\Gsc$ needs to satisfy the following conditions:
	\beq\label{tri}
	|q_1|<|q_2|+|q_3|, \quad |q_2|<|q_1|+|q_3|, \quad |q_3|<|q_1|+|q_2|.
	\eeq
	In fact, \eqref{HCrel} yields that
	$$
	|q_k| = t \sum_{i \neq k} \frac{1}{r_i^2}
	$$
	for some $t>0$. Thus we have
	$$
	\sum_{i=1}^3 |q_i| = 2t \sum_{i=1}^3 \frac{1}{r_i^2}
	$$
	and \eqref{tri} follows.
	
\end{enumerate}

\end{rema}

\noindent \textit{Proof of Theorem \ref{HCell}.}
For $k \in \{ 1,\ldots,d \}$, let $u_k(x) = x_k$ ($x \in \Rbb^d$). It is enough to show that $u_k$ satisfies the imperfect bonding condition \eqref{HC}, namely,
$$
\jbk{\left. \Gsc \nabla u_k \right|_{-}, \Gv} - \jbk{\left. \Gsm \nabla u_k \right|_{+}, \Gv} = \operatorname{div}_S \left(\Ga \nabla_S u_k \right),
$$
which amounts to proving
\beq
\operatorname{div}_S \left(\Ga \nabla_S u_k \right)= - q_k \nu_k \quad\text{on } \p E.
\eeq

According to \eqref{eq:prod:surf:div}, it holds that
$$
\operatorname{div}_S \left(\Ga \nabla_S u_k \right) = \jbk{\nabla_S \Ga, \nabla_S u_k}  + \Ga \GD_S u_k.
$$
By \eqref{eq:surface:grad:Cartesian} and \eqref{eq:surface:Laplacian:Cartesian}, we have
$$
\nabla_S \Ga = \nabla \Ga - (\p_\nu \Ga) \nu, \quad  \nabla_S u_k = e_k - \nu_k \nu, \quad \GD_S u_k = - \left( \nabla \cdot \Gv\right) \nu_k
$$
where $e_k$ is the unit vector whose $k$-th component is $1$. Thus we have
\beq\label{5000}
\operatorname{div}_S \left(\Ga \nabla_S u_k \right) = \frac{\p \Ga}{\p x_k} - (\partial_\nu\alpha)\nu_k + \alpha(-\nabla\cdot \nu)\nu_k.
\eeq

Let
$$
\eta_k:= q_k \left( \sum_{i \neq k} \frac{1}{r_i^2} \right)^{-1}
$$
so that $\Ga(x) = \eta_k w(x)$ ($w$ is defined in \eqref{def_w}).

The outward unit normal $\Gv$ to $\p E$ is given by
$$
\Gv(x) = \left(\frac{x_1}{r_1^2 w}, \cdots,\frac{x_d}{r_d^2 w}\right)
$$
and
\beq\label{5100}
\frac{\partial w}{\partial x_k} = \frac{x_k}{r_k^4 w}= \frac{\nu_k}{r_k^2}.
\eeq
Thus, we have
\beq\label{5200}
\p_\nu \Ga (x) = \frac{\Gn_k}{w^2}\sum_{i=1}^d \frac{x_i^2}{r_i^6}.
\eeq
We also have
$$
	\frac{\partial \nu_k}{\partial x_k}
	= \frac{1}{r_k^2 w}
	+ \frac{x_k}{r_k^2}\left(-\frac{1}{w^2}\right)\frac{x_k}{r_k^4 w},
$$
and hence
\beq\label{5300}
\nabla\cdot \nu
	= \frac{1}{w}\sum_{i=1}^d \frac{1}{r_i^2}
	- \frac{1}{w^3}\sum_{i=1}^d \frac{x_i^2}{r_i^6}.
\eeq
Plugging identities \eqref{5100}, \eqref{5200} and \eqref{5300} into \eqref{5000}, we obtain 	
	\begin{align*}
	& \operatorname{div}_S \left(\Ga \nabla_S u_k \right) \\
	&= \Gn_k\frac{\nu_k}{r_k^2}
	- \left(\frac{\Gn_k}{w^2}\sum_{i=1}^d \frac{x_i^2}{r_i^6}\right)\nu_k + \Gn_k w\left(
	-\frac{1}{w}\sum_{i=1}^d \frac{1}{r_i^2}
	+ \frac{1}{w^3}\sum_{i=1}^d \frac{x_i^2}{r_i^6}
	\right)\nu_k
	\\
	&= -\Gn_k \left(\sum_{i \neq k} \frac{1}{r_i^2}\right) \nu_k \\
	&=-q_k \Gv_k.
	\end{align*}
This completes the proof.
\qed

\section{Proof of Theorem \ref{characterization_HC}}

We begin with the following elementary geometric lemma.
\begin{lemm}\label{lemm:chracterization:ellipsoid}
Let $D\subset \Rbb^d$ ($d\geq 2$) be a bounded domain with $C^1$-smooth connected boundary $\partial D$. If
	\begin{align}\label{eq:geometric:bdry}
	Ax - b = \Ga\Gv \quad \text{ on } \p D,
	\end{align}
	where $A$ is a symmetric matrix, $b$ is a constant vector, and $\Ga$ is a positive (resp. negative) function, then $A$ is a positive-definite (resp. negative-definite) matrix and $\p D$ is an ellipsoid of the form \eqref{eq:HC:ellipsoid} for some constant $c_0$.
\end{lemm}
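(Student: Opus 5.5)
The plan is to show first that the quadratic function $f(x) := \jbk{x, Ax} - 2\jbk{b, x}$ is constant on $\p D$, and then to read off definiteness of $A$ from the sign of $\Ga$ at a suitable extremal point.

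\emph{Step 1: $f$ is constant on $\p D$.} Note that $\nabla f(x) = 2(Ax - b)$, since $A$ is symmetric. Let $r$ be any $C^1$ curve on $\p D$. Then
$$
\frac{d}{dt} f(r(t)) = 2\jbk{A r - b, r'} = 2\Ga \jbk{\nu, r'} = 0 ,
$$
because $r'$ is tangent to $\p D$. Since $\p D$ is connected, $f \equiv c_0$ on $\p D$ for some constant $c_0$. Hence $\p D \subset \{f = c_0\}$.

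\emph{Step 2: $A$ is positive-definite when $\Ga > 0$.} The case $\Ga < 0$ follows by replacing $(A, b, \Ga)$ with $(-A, -b, -\Ga)$. For a unit vector $e$, take a point $x_0 \in \p D$ where the linear function $\jbk{e, x}$ attains its maximum over $\overline{D}$. At $x_0$ we have $\nu(x_0) = e$, and the hypothesis gives $Ax_0 - b = \Ga(x_0) e$. This alone does not give $\jbk{Ae, e} > 0$, so a second-order argument is needed.

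I would try to maximize instead a strictly convex function, say $g(x) = |x - p|^2$, over $\overline{D}$, and compare curvatures at the maximizer. An easier route is to differentiate the identity along $\p D$. For $v \in T_x\p D$,
$$
Av = D_v(Ax - b) = (D_v\Ga)\,\nu - \Ga W v .
$$
Pairing with $v$ gives
$$
\jbk{Av, v} = -\Ga\,\mathrm{II}(v, v) .
$$
This needs $\Ga \in C^1$, which holds where the ellipsoid is $C^2$; the regularity can be bootstrapped from $\Ga = |Ax - b|$, since $\Ga > 0$ forces $Ax - b \neq 0$ and $\Ga$ inherits the smoothness of $x$. At a point of $\p D$ farthest from the origin, all principal curvatures are negative in the paper's convention, so $A$ is positive on that tangent hyperplane. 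Its normal direction $e = \nu(x_0)$ satisfies $Ax_0 - b = \Ga e$.

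I expect the main obstacle to be getting full definiteness rather than definiteness on a hyperplane. One way is to consider the function $f$ itself: $\nabla f = 2\Ga\nu$ never vanishes on $\p D$ and points outward. If $A$ had a nonpositive eigenvalue with eigenvector $e$, then restricting $f$ to lines $x + te$ would make $f$ concave or affine in $t$. The line through an interior point in direction $e$ exits $\overline{D}$ at two boundary points, where $f = c_0$ and $\partial_t f$ has opposite signs, outward meaning increasing. So $f$ increases at the exit point in the $+e$ direction and decreases at the exit point in the $-e$ direction, i.e. it increases outward at both ends, while being concave or affine along the segment. For the affine case this is an immediate contradiction, and for the concave case it forces the derivative to increase along the segment, again a contradiction. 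This uses only $C^1$ regularity and transversality, which holds when the exit is non-tangential; I would choose a generic line to ensure that.

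\emph{Step 3: $\p D$ is the whole ellipsoid.} With $A > 0$, the set $\{f = c_0\}$ is an ellipsoid, possibly empty or a single point, but it contains $\p D$, so it is a genuine ellipsoid. Then $\p D$ is a nonempty compact $(d-1)$-manifold contained in the connected manifold $\{f = c_0\}$, so it is open and closed in it, and therefore equals it. This gives \eqref{eq:HC:ellipsoid}.
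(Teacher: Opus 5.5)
Your proof is correct. Step 1 is the same as the paper's: $\nabla f = 2(Ax-b) = 2\alpha\nu$ is normal to $\partial D$, so $f\equiv c_0$ on the connected boundary. The two arguments differ at definiteness.

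The paper asserts in one line that, since $D$ is bounded, the quadric containing $\partial D$ must be an ellipsoid with $A$ definite. This implicitly uses the classification of quadrics: cylinders, cones, paraboloids and hyperboloids have no compact regular components. The paper then fixes the sign by noting that $\nabla f = 2\alpha\nu$ points outward.

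You get definiteness and its sign together from a one-variable argument. If $e$ is an eigenvector with eigenvalue $\lambda\le 0$, then $t\mapsto f(p+te)$ is concave or affine, yet its derivative $2\alpha\langle\nu,e\rangle$ is negative where the line enters $D$ and positive where it leaves. This needs no classification and only $C^1$ regularity. Your Step 3, the open-and-closed argument, also proves that $\partial D$ is all of $\{f=c_0\}$, which the paper leaves tacit when it calls $\partial D$ ``a quadric.'' The paper's version is shorter because it leans on the classification.

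Two small remarks. First, the curvature computation at the farthest point can be deleted: it only gives positivity on a tangent hyperplane, and the line argument supersedes it. Second, the ``generic line'' needs a justification, for example Sard's theorem applied to the projection $\partial D\to e^{\perp}$. You can avoid genericity altogether by completing your first support-point idea with the opposite support point. Let $x_\pm\in\partial D$ maximize and minimize $\langle e,x\rangle$ over $\overline D$, so that $\nu(x_\pm)=\pm e$. Subtracting the two instances of $Ax-b=\alpha\nu$ and pairing with $e$ gives $\lambda\langle x_+-x_-,e\rangle=\alpha(x_+)+\alpha(x_-)>0$. Since $\langle x_+-x_-,e\rangle>0$, this yields $\lambda>0$.
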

\begin{proof}
	Define a function $f:\Rbb^d \to \Rbb$ by
	\begin{align}
	f(x):=\jbk{x, Ax}-2\jbk{b, x}.
	\end{align}
	Taking a tangential derivative along an arbitrary tangential vector $v$ of $\partial D$ at $x$, we obtain from \eqref{eq:geometric:bdry} that
	\begin{align}
	D_v f(x)=2\jbk{v, Ax}-2\jbk{v, b}=2\Ga(x)\jbk{v, \nv_x}=0.
	\end{align}
	Since $\p D$ is connected, $f(x) = c_0$ on $\partial D$ for some constant $c_0$, \textit{i.e.}, $\p D$ is a quadric. Since $D$ is bounded, the quadric $\partial D$ must be an ellipsoid and $A$ is positive or negative definite. If $g$ is positive, the identity $\nabla f(x)=2 \Ga(x)\nv_x$ ($x\in \partial D$) shows that the direction in which $f$ increases must point toward the exterior of ellipsoid $\partial D$. Thus $A$ must be positive definite and $\p D$ is the ellipsoid of the form \eqref{eq:HC:ellipsoid}. If $\Ga$ is negative, the same argument applied to \(-f\) shows that \(-A\) is positive
	definite, and hence \(A\) is negative definite and $\p D$ is the ellipsoid of the form \eqref{eq:HC:ellipsoid}.
\end{proof}

Suppose that $(D, \Gs, \Ga)$ is HC-type neutral to multiple uniform fields and $\Ga >0$ or $\Ga < 0$.
Then \eqref{1000} holds.
After a rotation, if necessary, we may assume that $\Gsm - \Gsc$ is a diagonal matrix, and write
\beq\label{eq:def_Q}
Q:= \Gsm - \Gsc = \operatorname{diag}(q_1, \ldots, q_d).
\eeq
Then, \eqref{1000} can be rephrased as
\beq\label{2000}
-q_k \nu_k = \operatorname{div}_S (\alpha \nabla_S u_k), \quad k=1, \ldots, d,
\eeq
where $u_k(x) = x_k$ ($x \in \Rbb^d$).
Let $A$ be the matrix defined in \eqref{def_A}.
We prove \eqref{3000} from which Theorem \ref{characterization_HC} follows (by Lemma \ref{lemm:chracterization:ellipsoid}).

We begin with the two-dimensional case, which is straightforward.

\medskip

\noindent\textit{Proof for the two-dimensional case}. If $d=2$, then
$A=\operatorname{diag}(q_2, q_1)$.
Let $\tau=(-\Gv_2, \Gv_1)^T$. Then the tangential projection $P$ is given by
\begin{align}
P = \Gj \Gj^T.
\end{align}
Let $x=x(s)$ be the arclength parametrization on $\p D$, oriented so that
\[
\frac{dx}{ds}=\tau.
\]
Then one can see from \eqref{eq:surface:grad:Cartesian}--\eqref{eq:surface:Laplacian:Cartesian} that
\begin{align*}
\nabla_S f = \left(\frac{d f}{ds} \right) \Gj, \ \
\operatorname{div}_S F = \left(\frac{d F}{ds}\right) \cdot \Gj, \ \
\label{surface_div_ftau} \operatorname{div}_S \left(f\Gj\right) = \frac{df}{ds}, \ \
\GD_S f &= \frac{d^2f}{ds^2} .
\end{align*}

Since $\nabla_S u_1 = -\Gv_2 \Gj$ and $\nabla_S u_2 = \Gv_1 \Gj$, it follows from \eqref{2000} that
$$ 
-q_1 \Gv_1 = \frac{d}{ds} (-\Ga \Gv_2), \quad
-q_2 \Gv_2 = \frac{d}{ds} (\Ga \Gv_1).
$$
We then infer from these identities that
$$
\frac{d}{ds}(Ax) = A\tau = A\begin{pmatrix}
-\Gv_2 \\
\Gv_1
\end{pmatrix} = \frac{d}{ds} \left( \Ga \Gv \right).
$$
Since $\p D$ is connected, we conclude that $Ax -  \Ga\Gv$ is constant on $\p D$. This completes the proof for the two-dimensional case.

\medskip

\noindent\textit{Proof for the three-dimensional case}. If $d=3$, then $A=\operatorname{diag}(a_1,a_2,a_3)$, where
$$
a_1=\frac{-q_1 +q_2+q_3}{2},
\ \
a_2=\frac{q_1-q_2+q_3}{2},
\ \
a_3=\frac{q_1+q_2-q_3}{2}.
$$

Let $e_k$ be the unit vector whose $k$-th component is $1$ as before. Since $\nabla_S u_k= e_k - \Gv_k \Gv$, we have from \eqref{mean}, \eqref{eq:prod:surf:div} and \eqref{2000} that
$$
-q_k \Gv_k =  \left(\nabla_S \Ga\right)_k + 2 \Ga H \Gv_k, \quad k=1, \ldots, d,
$$
or equivalently, in matrix form,
\begin{align}
-Q\Gv = \nabla_S \Ga + 2\Ga H \Gv \quad\text{on } \p D.
\end{align}
It then follows that
\begin{align}
\label{eq:neutrality:normal} -\jbk{\Gv, Q\Gv} &= 2 \Ga H   \\
\label{eq:neutrality:tangential} -PQ\Gv &= \nabla_S \Ga.
 \end{align}

We first prove that $\p D$ and $\Ga$ are of class $C^{\infty}$. In fact, if $\p D$ is of class $C^{k,\Gt}$ for some $k\geq 2$ and $0\leq \Gt<1$ (here $C^{k, 0}=C^k$), then $\Gv$ is of class $C^{k-1,\Gt}$, and we have from definition \eqref{defP} of $P$ that $PQ\Gv$ is of class $C^{k-1,\Gt}$. We then infer from \eqref{eq:neutrality:tangential} that $\Ga$ is of class $C^{k,\Gt}$. Thus we infer from \eqref{eq:neutrality:normal} that $H$ is $C^{k-1,\Gt}$. Since the mean curvature equation is quasilinear and uniformly elliptic for each classical solution, this implies by means of interior Schauder estimates that $\p D$ is $C^{k+1,\Gt}$ when $0<\theta<1$, and $C^{k, \tau}$ for any $0< \tau < 1$ when $\theta=0$ (see, for example, \cite[Theorem 6.17]{GT}). Iterating the above argument shows that both $\partial D$ and $\alpha$ are of class $C^\infty$.

It follows from \eqref{eq:trace:Gamma} and \eqref{eq:neutrality:normal} that
\begin{equation} \label{eq:trace:Gamma:PM}
\begin{aligned}
\operatorname{tr}_S PA &= \tr A - \jbk{\Gv, A\Gv} \\
&= a_1+a_2+a_3 - a_1 \Gv_1^2 - a_2 \Gv_2^2 - a_3 \Gv_3^2 \\
&=(a_2+a_3)\Gv_1^2 + (a_1+a_3)\Gv_2^2 + (a_1+a_2)\Gv_3^2 \\
&= \jbk{\Gv, Q\Gv} \\
&= -2 \Ga H
\end{aligned}
\end{equation}
and from \eqref{eq:neutrality:tangential} that
\begin{align} \label{eq:PMnu}
PA\Gv = A\Gv - \jbk{\Gv, A\Gv} \nu= -PQ\Gv = \nabla_S \Ga.
\end{align}

Set $\GG = \p D$. Define a function $F: \GG \to \Rbb^3$ by
\begin{equation}\label{eq:def_F}
F(x):=Ax-\alpha\nu.
\end{equation}
Then, we have from the product rule and the equation \eqref{eq:Gauss:der} that
\begin{align}
D_X F = AX - \jbk{X,\nabla_S\Ga}\Gv + \Ga WX
\end{align}
Therefore, we have from \eqref{eq:PMnu} and the facts $P^T = P$, $PX = X$, and $A^T = A$ that
\begin{equation}\label{eq:representation:F}
\begin{aligned}
D_X F &= AX - \jbk{X,PA\Gv}\Gv + \Ga WX \\
&= AX - \jbk{AX, \Gv}\Gv + \Ga WX \\
&= PAX + \Ga WX.
\end{aligned}
\end{equation}

Since $\GG$ and $\alpha$ are of class $C^\infty$, the projection $P$
and the shape operator $W$ are smooth on $\Gamma$. Thus $D_X F \in C^\infty(\Gamma)$. Moreover, since $W$ maps from $\mathfrak X(\Gamma)$ to $\mathfrak X(\Gamma)$, we have $D_X F = PAX+\alpha WX\in\mathfrak X(\Gamma)$.

We define a $(0,2)$-tensor $B: \mathfrak{X} (\GG) \times \mathfrak{X} (\GG) \to C^{\infty}(\GG)$ and a corresponding $(1,1)$-tensor $T=T_B: \mathfrak{X} (\GG) \to \mathfrak{X} (\GG)$  by
\begin{align}
\label{def:B} B(X,Y) &:= \langle TX, Y\rangle \\
TX &:= D_X F= PAX + \Ga WX.
\end{align}

Suppose that $B$ is a symmetric, trace-free, Codazzi tensor. Since $\GG$ is a connected $C^\infty$ surface of genus zero, the classification theorem for compact surfaces implies that $\GG$ is $C^\infty$-diffeomorphic to the unit sphere $\mathbb{S}^2$ (see, for example, \cite{WB}). Therefore, we infer from Theorem \ref{theo:trace-free-Codazzi-sphere} that $B=0$, or equivalently $D_X F=0$. Since $X$ is arbitrary and $\GG$ is conneted, we conclude that $F$ is constant on $\GG$ and the proof is completed by Lemma \ref{lemm:chracterization:ellipsoid}. So it sufficient to prove that $B$ is a symmetric, trace-free, Codazzi tensor. 

Since $P^T = P$, $PY= Y$, \(A^T=A\) and \(W\) is symmetric, we have
\begin{equation}
\begin{aligned}
B(X,Y)
&= \langle PAX + \Ga WX,Y\rangle \\
&=\langle AX,Y\rangle+\alpha\langle WX,Y\rangle \\
&=\langle X,AY\rangle+\alpha\langle X,WY\rangle \\
&=\langle AY,X\rangle+\alpha\langle WY,X\rangle \\
&=B(Y,X).
\end{aligned}
\end{equation}
Thus $B$ is symmetric.

Let \(x\in\Gamma\), and let
	\(\{e_1,e_2\}\) be an orthonormal basis of \(T_x\Gamma\). Then, we have from \eqref{def:mean_curvature} and \eqref{eq:trace:Gamma:PM} that
	\begin{equation}
	\begin{aligned}
	\operatorname{tr}_S B
	=\operatorname{tr}_S T
	=
	\underbrace{\operatorname{tr}_S PA}_{=-2\alpha H}
	+
	\alpha\underbrace{\operatorname{tr}_S W}_{=2H} =0.
	\end{aligned}
	\end{equation}
Thus $B$ is trace-free.

Since $TY \in \mathfrak{X}(\GG)$, we have from the metric compatibility \eqref{eq:metric_compatibility} that
	\begin{equation*}
	\begin{aligned}
	D_X(B(Y,Z)) &= D_X(I(TY,Z)) \\
	&=\jbk{\nabla_{X}^{\GG}\left(TY \right), Z} + \jbk{TY, \nabla_{X}^{\GG} Z}.
	\end{aligned}
	\end{equation*}	
	Therefore, we have from the definitions of the covariant derivative of $B$ in \eqref{def:corvariant_deri:B} and $B$ in \eqref{def:B} that
	\begin{align}\label{eq:nabla:XB}
	\left(\nabla_X^{\GG} B\right)(Y,Z) &=\jbk{\nabla_{X}^{\GG}\left(TY \right), Z} - \jbk{T\left(\nabla_{X}^{\GG}Y\right), Z }.
	\end{align}

We now use the standard commutation identity for directional derivatives:
\begin{align*}
D_XD_YF-D_YD_XF-D_{[X,Y]}F=0.
\end{align*}
Since $D_Z F = TZ \in \mathfrak{X}(\GG)$ for $Z \in \mathfrak{X}(\GG)$,  taking tangential projection $P$ in above equation, we have from the definition of $\nabla^{\GG}$ in \eqref{covform} that
	\begin{equation}\label{eq:Codazzi:T}
	\begin{aligned}
	0 &=PD_X (TY) - PD_Y (TX) - PT[X,Y] \\
	&= \nabla_X^{\GG} (TY) - \nabla_Y^{\GG} (TX) - T[X,Y].
	\end{aligned}
	\end{equation}
	Therefore, we have from the equations \eqref{eq:nabla:XB} and \eqref{eq:Codazzi:T}, and torsion-free property \eqref{eq:torsion:free} that
	\begin{align*}
	&\left(\nabla_X^{\GG} B\right)(Y,Z) - \left(\nabla_Y^{\GG} B\right)(X,Z) \\
	&= \langle \underbrace{\nabla_{X}^{\GG}\left(TY \right) - \nabla_{Y}^{\GG}\left(TX \right)}_{=T[X,Y]} , Z \rangle - \langle T\underbrace{\left(\nabla_{X}^{\GG}Y - \nabla_{Y}^{\GG}X\right)}_{=[X,Y]}, Z \rangle = 0
	\end{align*}
Thus $B$ is a Codazzi tensor.

\end{document}